\theoremstyle{thmstyleone}%
\newtheorem{theorem}{Theorem}[section]
\newtheorem{proposition}[theorem]{Proposition}%
\theoremstyle{thmstyletwo}%
\newtheorem{remark}{Remark}%
\newtheorem{lemma}[theorem]{Lemma}
\theoremstyle{thmstylethree}%
\newtheorem{question}{Question}
\newtheorem{problem}{Problem}
\newtheorem{cor}[theorem]{Corollary}
\begin{document}

\title[]{Cyclic Analytic $2$-isometry of Finite Rank and Cauchy Dual Subnormality Problem}


\author[1]{\fnm{} \sur{M. N.  Khasnis}}\email{mandar.khasnis@gmail.com}
\author[2]{\fnm{} \sur{V. M. Sholapurkar}}\email{vmshola@gmail.com}

\affil[1]{\orgdiv{Department of Mathematics}, \orgname{Smt. CHM College}, \orgaddress{\city{Ulhasnagar}, \postcode{421003}, \state{Maharashtra}, \country{India}}}

\affil[2]{\orgname{Bhaskaracharya Pratishthana}, \orgaddress{ \city{Pune}, \postcode{411004}, \state{Maharashtra}, \country{India}}}

\abstract{The Cauchy dual subnormality problem for $2$-isometries has attracted the attention of the researchers in recent years. In this article, we describe the problem and present a new counter-example to the problem by constructing a family of analytic, cyclic 2-isometric operators whose Cauchy dual is not subnormal. The said example is in contrast with the class of operators found in \cite{cgr2022} whose Cauchy dual is subnormal. In the process, we employ the technique of realizing a $2$-isometry as a shift operator on a suitable  de Branges-Rovnyak space. The construction of the counter example involves some unwieldy computations around the roots of a polynomial of degree four. We therefore use numerical techniques with the help of SageMath to facilitate the computational work. }

\keywords{Cauchy dual, de Branges-Rovnyak space, reproducing kernel, Dirichlet type spaces}



\maketitle

\section{Introduction and Preliminaries}\label{sec1}
The notion of Cauchy dual of an operator was introduced by Shimorin in \cite{Sh}.  The notion of Cauchy dual turns out to be interesting, especially while simultaneously studying two classes of  operators which are in some sense antithetical to each other. An excellent illustration of such classes is provided by contractive subnormal operators and completely hyperexpansive operators. The interplay between these two classes of operators can be thought of as an operator theoretic manifestation of two special classes of functions viz. completely monotone functions and completely alternating functions. Also, both the classes of operators have been described in terms of operator inequalities. This fruitful synthesis of harmonic analysis with operator theory has been  extensively explored in the works of Athavale and co-workers \cite{athavale_vms1999, athavale_ranjekar2002_1, athavale_ranjekar2002_2, athavale2003, sholapurkar2000} as well as in the works of Jablonski and Stochel \cite{jablonski2001, jablonski2002, jablonski2003, jablonski2004, jablonski2006}. It turns out that the notion of Cauchy dual fits naturally in these circumstances and helps in understanding the subtle interconnections.

The said theme begins with a simple observation :- If $T:\{\alpha_n\}$ is a weighted shift operator, then the Cauchy dual $T'$ is the weighted shift with weight sequence  $\Big\{\displaystyle\frac{1}{\alpha_n}\Big\}$. Combining this fact together with a result of \cite[Chapter 4, Proposition 6.10]{bcr1984} enabled Athavale \cite{At} to prove that the Cauchy dual of a completely hyperexpansive weighted shift is a contractive subnormal weighted shift. As a  generalization of this result, it is natural to ask the following question:
\\
\begin{question}
	Is the Cauchy dual of a completely hyperexpansive operator subnormal?\\
\end{question}
This question appears in \cite[Question 2.11]{Ch}. 
Though the question still remains open in this full generality, some special cases of this problem have been dealt with in recent years. Chavan et. al. have extensively contributed in solving the problem in some special cases.  The phrase `Cauchy dual subnormality problem' has evolved through these considerations. 

In order to set up the context, we make a brief mention of these special cases here, and elaborate on these results in the next section.


\begin{itemize}
	\item An affirmative answer to CDSP for a family of multiplication operators on Dirichlet-type spaces has been given in \cite{cgr2022}. The work relies on the results on operator theory in de Branges-Rovnyak spaces.
	\item First counterexample  to CDSP has  been found in \cite{sc2019}. A class of $2$-isometric weighted shifts on directed trees such that their Cauchy dual is not subnormal, has been constructed in this paper.
	\item A class of cyclic 2-isometric composition operators has been exhibited in \cite{sc2020} whose Cauchy dual are not subnormal.
\end{itemize}

  In this paper, capitalizing on the works of  \cite{costara2016} and \cite{cgr2022}, we produce a counter example to the CDSP by way of constructing an example of cyclic, analytic $2$-isometry on a Dirichlet-type space, whose Cauchy dual is not subnormal. The said Dirichlet-type space is obtained by choosing a finitely supported measure on the unit circle $\mathbb{T}$. Our construction relies on the fact that such a Dirichlet-type space coincides with a suitable de Branges-Rovnyak space. Moreover, the construction involves the computation of the roots of a polynomial of degree four. In this process, we employ some numerical techniques by using the SageMath tool. 
    
\subsection{Preliminaries}
Let $\mathbb{R}$ denote the set of real numbers, $\mathbb{C}$ denote the set of complex numbers, $\mathbb{D}$ denote the open unit disc and $\mathbb{T}$ denote the unit circle.

The notion of a reproducing kernel Hilbert space (RKHS) is well known and has been extensively studied, explored and crucially used in the theory of operators on Hilbert spaces. We quickly recall the definition of RKHS for a ready reference. \\ 


Let $S$ be a non-empty set and $\cal F$$(S,\mathbb{C})$ denote set of all functions from $S$ to $\mathbb{C}$. We say that $\cal H\subseteq \cal F$$(S,\mathbb{C})$ is a {\it \bf Reproducing Kernel Hilbert Space}, briefly {\it RKHS}, if
\begin{enumerate}
	\item[i.] $\cal H$ is a vector subspace of $\cal F$$(S,\mathbb{C})$
	\item[ii.] $\cal H$ is equipped with an inner product $\langle , \rangle$ with respect to which $\cal H$ is a Hilbert space
	\item[iii.] For each $x\in S$, the linear evaluation functional $E_x:\cal H$$\to \mathbb{C}$ defined by $E_x(f)=f(x)$, is bounded.
\end{enumerate}
If $\cal H$ is an RKHS on $S$ then for each $x\in S$, by Riesz representation theorem, there exists unique vector $k_x\in \cal H$ such that, for each $f\in \cal H$, $f(x)=E_x(f)=\langle f,k_x\rangle$.
\\The function $K:S\times S \to \mathbb{C}$ defined by $K(x,y)=k_y(x)$ is called as the reproducing kernel for $\cal H$. A classical reference for reproducing kernel Hilbert spaces is \cite{paulsen2016RKHS}.\\

We now recall the definitions of three special types of RKHS which are relevant in the present context. 
\begin{enumerate}
	
	\item {\bf Hardy Space:} Let Hol$(\mathbb{D})$ denote the set of all holomorphic functions on $\mathbb{D}$. The Hardy space $H^2$ is defined as
	\[H^2=\Bigg\{f(z)=\displaystyle\sum_{n=0}^{\infty}a_nz^n\in \text{Hol}(\mathbb{D}) : \displaystyle\sum_{n=0}^{\infty}|a_n|^2<\infty\Bigg\}.\]
	It is a well known fact that $H^2$ is a reproducing kernel Hilbert space with the inner product defined as $\langle f,g \rangle :=\displaystyle\sum_{n=0}^{\infty}a_n\overline{b_n}$, if $f(z)=\displaystyle\sum_{n=0}^{\infty}a_nz^n$ , $g(z)=\displaystyle\sum_{n=0}^{\infty}b_nz^n$ and a kernel(called as {\it Szego Kernel}) function $K_\lambda(z)=\displaystyle\frac{1}{1-\overline{\lambda}z}$. The interested reader may consult \cite{douglas2012}.\\
	
	\item {\bf Dirichlet-type spaces : }  For a finite positive Borel measure $\mu$ on unit circle $\mathbb{T}$, the Dirichlet-type space $D(\mu)$ is defined by
	\[D(\mu):=\Bigg\{f\in \text{Hol}(\mathbb{D}) : \displaystyle\int_{\mathbb{D}}|f'(z)|^2P_\mu(z)dA(z)<\infty \Bigg\}.\]
	Here, $P_\mu(z)$ is the Poisson integral for measure $\mu$ given by  $\displaystyle\int_{\mathbb{T}}\displaystyle\frac{1-|z|^2}{|z-\zeta|^2}d\mu(\zeta)$,\\ \noindent$dA$ denotes the normalised Lebesgue area measure on the open unit disc $\mathbb{D}$ and Hol$(\mathbb{D})$ denotes the set of all holomorphic functions on $\mathbb{D}$. 
	The space  $D(\mu)$ is a reproducing kernel Hilbert space and   multiplication by the coordinate function $z$ turns out to be a bounded linear operator (refer \cite{aleman1993}) on $D(\mu)$. For an elaborate discussion on Dirichlet type spaces the reader may also consult \cite{dirichlet_sp2014}. S. Richter described a model for a cyclic, analytic $2$-isometry\cite{richter1991}. Indeed, a  cyclic, analytic $2$-isometry is unitarily equivalent to  multiplication operator $M_z$ on a Dirichlet-type space $D(\mu)$ for some finite, positive Borel measure $\mu$ on unit circle $\mathbb{T}$.
	
	The study of Dirichlet type spaces by way of identifying such a space with a de Branges-Rovnyak space was initiated by D. Sarason. This association has been further strengthened in recent years in the works of  \cite{cgr2022, sarason97, cgr2010, kellay2015}. The identification of a Dirichlet space as a de Branges-Rovnyak space allows one to compute the reproducing kernel for the Dirichlet space. \\ Here, we include a brief description of de Branges-Rovnyak space for a ready reference. \\
	
	\item {\bf de Branges-Rovnyak spaces:}
	\\For complex, separable Hilbert spaces $\cal U$, $\cal V$, let $\cal B(U,V)$ denote the Banach space of all bounded linear transformations from $\cal U$ to $\cal V$. The {\bf Schur class} $S(\cal U,V)$ is given by
	\[S({\cal U,V})=\Big\{B:\mathbb{D} \to {\cal B(U,V)} : B \text{ is holomorphic, } \displaystyle\sup_{z\in \mathbb{D}}\|B(z)\|_{B({\cal U,V})}\leq 1\Big\}.\]
	Observe that when ${\cal U}=\mathbb{C}=\cal V$ then the Schur class is nothing but the closed unit ball of $H^\infty(\mathbb{D})$, the set of all bounded holomorphic functions on $\mathbb{D}$. For any $B\in S(\cal U,V)$, the de-Branges-Rovnyak space, $H(B)$ is the reproducing kernel Hilbert space associated with the $\cal B(V)$-valued semidefinite kernel given by
	\[K_B(z,w)=\displaystyle\frac{I_{\cal V}-B(z)B(w)^*}{1-z\overline{w}},~~z,w\in \mathbb{D}.\]
	For equivalent formulations of de Branges-Rovnyak spaces, the reader is referred to an excellent article by J. Ball \cite{Ball2015}. 
	The kernel $K_B$ is normalised if $K_B(z,0)=I_{\cal V}$ for every $z\in \mathbb{D}$. This is equivalent to the condition $B(0)=0$. Further, when ${\cal U}=\mathbb{C}=\cal V$, we denote $H(B)$ by $H(b)$, the classical de Branges-Rovnyak space (Refer \cite{fricain_mashreghi_2016vol2} for the basic theory of the classical de Branges-Rovnyak spaces). 
\end{enumerate}

Let $\cal H$ be a complex, infinite dimensional, separable Hilbert space and $\cal B(\cal H)$ denotes the $C^*$-algebra of bounded linear operators on $\cal H$. 
Here, we record a few definitions which are required in the sequel. 

Let $T\in {\cal B(H)}$.
We say that $T$ is {\it cyclic} if there exists a vector $f\in \cal H$ such that ${\cal H}=\vee \{T^nf : n\geq 0\}$(vector $f$ is known as {\it cyclic vector}). An operator $T$ is said to be {\it analytic} if $\cap_{n\geq 0}T^n{\cal H}=\{0\}$. The  {\it Cauchy dual} $T'$ of a left invertible operator $T\in \cal B(\cal H)$ is defined as $T'=T(T^*T)^{-1}$. Following Agler \cite{agler1995_1}, an operator $T\in \cal B(\cal H)$ is said to be a {\it $2$-isometry} if \[I-2T^*T+T^{*2}T^2=0.\]
An operator $T\in \cal B(H)$ is said to be {\bf subnormal } if there exist a Hilbert space $\cal K$ and an operator $S\in \cal B(K)$ such that $\cal H \subseteq K$, $S$ is normal and $S|_{\cal H}=T$. Readers are encouraged to refer \cite{conway1991} for the detailed study of subnormal operators. Agler proved in \cite{agler1985} that $T\in \cal B(H)$ is a subnormal contraction (means $T$ is subnormal and $\|T\|\leq 1$) if and only if 
\[B_n(T)\equiv \sum_{k=0}^{n}(-1)^k \binom{n}{k}T^{*k}T^k\geq 0 \text{ for all integers }n\geq 0.\]
Following \cite{At}, an operator $T\in B(\cal H)$ is said to be {\it completely hyperexpansive} if \[B_n(T)\leq 0~~ \text{for all integers }n\geq 1.\]

Now we are ready to state and describe the problem that has been tackled in this article. 
\subsection{The Problem} 
We begin our discussion with the fact \cite[Theorem 6.1]{cgr2022} which states that for a cyclic, analytic $2$-isometry $T$ in $\cal B(H)$, the rank of $T^*T-I$ is finite if and only if there exist a finitely supported measure $\mu$ on the unit circle $\mathbb{T}$  such that $T$ is unitarily equivalent to a multiplication by the coordinate function $z$ on $D(\mu)$. In fact, if the rank of $T^*T-I$ is a positive integer $k,$ then the corresponding measure $\mu$ is supported at  exactly $k$  points of unit circle $\mathbb{T}.$
In view of this result, a construction of a cyclic, analytic 2-isomtery reduces to choosing finitely many points on the unit circle and looking at the multiplication operator $M_z $ on the Dirichlet space $D(\mu)$, where $\mu$ is supported at the chosen points. Further, it is asserted that such a space $D(\mu)$  coincides with the \\ 
\noindent de Branges-Rovnyak space $H(B)$ for some $B$, with the equality of norms.

\noindent We now state the problem under consideration: 
 
\begin{problem}
	Characterize  finitely supported, positive, Borel measures $\mu$ on unit circle $\mathbb{T}$ such that the Cauchy dual of $M_z$ on $D(\mu)$ is subnormal.
\end{problem}

A solution to the problem in the  case when $\mu$ is supported at a single point is given  by the following result (see \cite[Corollary 3.6]{badea2019} and \cite[Corollary 5.4]{cgr2022}): Let $\lambda\in \mathbb{T}$ and $\gamma>0$. The Cauchy dual of $M_z$ on the Dirichlet-type space $D(\gamma \delta_\lambda)$ is a subnormal contraction.
Further, it is proved in \cite[Theorem 2.4]{cgr2022} that for a positive, Borel measure $\mu$ supported at any two antipodal points on unit circle $\mathbb{T}$, the Cauchy dual $M_z'$ of $M_z$ is subnormal. In view of these two results, it is natural to investigate the case of a measure supported at two non-antipodal points and address the problem in that case. 

Here, we consider the measures supported at two points of unit circle $\mathbb{T}$ such that the angle between them is $90^\circ$. Indeed, the main result of this paper is as given below.\\
\begin{theorem}\label{main}
	Let $\zeta_1$ and $\zeta_2$ be any two points on the unit circle $\mathbb{T}$ such that the angle between them is $90^\circ$(considered as vectors in $\mathbb{R}^2$). If $\mu=\delta_{\zeta_1}+\delta_{\zeta_2}$ is a measure on unit circle $\mathbb{T}$ then the Cauchy dual of $M_z$ on the Dirichlet-type space $D(\mu)$ is not subnormal.
\end{theorem}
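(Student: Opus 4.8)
The plan is to produce an explicit operator-theoretic model for $M_z$ on $D(\mu)$ and then apply Agler's positivity criterion to its Cauchy dual. By the rotation invariance of the construction (rotating the support of $\mu$ induces a unitary equivalence of $M_z$ with a scalar multiple of itself, under which both subnormality and the Cauchy dual are preserved), I would first normalise the two points to $\zeta_1 = 1$ and $\zeta_2 = i$, so that $\mu = \delta_1 + \delta_i$; the orthogonality of the points (angle $90^\circ$) is precisely the feature that makes the ensuing computation tractable and, as it turns out, fatal to subnormality.

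First I would use the identification $D(\mu) = H(B)$ quoted above to write down the reproducing kernel $K_B$ of $D(\mu)$ explicitly, and from it read off the Gram data $\langle z^m, z^n\rangle_{D(\mu)}$. Since $\mu$ is supported at two points, $M_z^*M_z - I$ has rank $2$; I would therefore write $M_z^*M_z = I + F$, with $F$ a positive, rank-two operator whose range is spanned by the two kernel-type vectors attached to $\zeta_1$ and $\zeta_2$. The Sherman-Morrison-Woodbury formula then yields $(M_z^*M_z)^{-1}$ in closed form, and hence the Cauchy dual $M_z' = M_z(M_z^*M_z)^{-1}$, together with the identity $(M_z')^*M_z' = (M_z^*M_z)^{-1}$.

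Next, recalling that the Cauchy dual of a $2$-isometry is automatically a contraction (a $2$-isometry forces the sequence $\|M_z^n f\|^2$ to have constant second difference, whence $M_z^*M_z \geq I$ and $(M_z')^*M_z' = (M_z^*M_z)^{-1} \leq I$), I would invoke Agler's theorem: it suffices to exhibit a single integer $n \geq 1$ and a vector $f$ for which
\[
\big\langle B_n(M_z')f, f\big\rangle \;=\; \sum_{k=0}^{n}(-1)^k\binom{n}{k}\big\|(M_z')^k f\big\|^2 \;<\; 0,
\]
since this contradicts subnormality of the contraction $M_z'$. Using cyclicity of $M_z'$, I would evaluate these quadratic forms on low-degree polynomials $f \in \operatorname{span}\{1, z, \dots, z^{N}\}$, reducing the question to the definiteness of an explicit finite Hermitian matrix assembled from the closed-form expression for $(M_z^*M_z)^{-1}$.

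The main obstacle is the explicit evaluation of this positivity test. The entries of $M_z^*M_z$ and of its inverse are algebraic expressions in the data of $\mu$, and locating the relevant sign change forces one to control the roots of a degree-four polynomial that admits no usable closed form. I would therefore carry out this final step with the numerical assistance of SageMath, confirming that the pertinent Hermitian matrix $B_n(M_z')$ acquires a strictly negative eigenvalue for a suitable small $n$; by Agler's criterion this certifies that $M_z'$ is not subnormal and completes the proof.
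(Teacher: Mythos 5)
Your structural preliminaries are sound and partly coincide with the paper's setup: the rotation reduction to $\mu=\delta_1+\delta_i$ is exactly how the paper passes from the special case to the general statement (via \cite[Proposition 7.1]{cgr2022}), the Cauchy dual of a $2$-isometry is indeed a contraction with $(M_z')^*M_z'=(M_z^*M_z)^{-1}$, and $M_z^*M_z-I$ does have rank two with range spanned by the boundary-evaluation vectors at $1$ and $i$. Where you genuinely diverge is the final step: you propose to refute subnormality by directly falsifying Agler's positivity condition $B_n(M_z')\geq 0$ on low-degree polynomials, whereas the paper never touches $B_n$ at all. It instead computes the reproducing kernel of $D(\mu)$ by Costara's method, identifies $D(\mu)$ with a vector-valued de Branges--Rovnyak space $H(B)$, $B=(p_1/q,\,p_2/q)$, via \cite[Theorem 6.4]{cgr2022} (a Cholesky factorization of an explicit $2\times 2$ positive matrix), and then invokes the ready-made criterion \cite[Corollary 4.3]{cgr2022}: non-subnormality of $M_z'$ follows from two finitely checkable algebraic facts, namely $\sum_j p_j(\alpha_1)\overline{p_j(\alpha_2)}\neq 0$ and $\alpha_1\overline{\alpha_2},\,\alpha_2\overline{\alpha_1}\notin[1,\infty)$, where $\alpha_1,\alpha_2$ are the relevant zeros of the quartic.

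The gap in your plan is the decisive step itself. Agler's criterion is an equivalence, so a violating pair $(n,f)$ does exist and is in principle detectable on polynomials by density; but your proposal neither exhibits such a pair nor gives any a priori bound on $n$ or on $\deg f$ at which the violation must appear. As written, the proof defers its entire content to an open-ended search and simply asserts that the search succeeds, which presupposes the theorem; a finite batch of positive checks proves nothing, and you have no stopping rule. (The paper's route is a terminating procedure precisely because \cite[Corollary 4.3]{cgr2022} disposes of the universal quantifier over $n$ once and for all.) There is also a practical circularity you underestimate: to run Sherman--Morrison--Woodbury you need the vectors representing boundary evaluation at $1$ and $i$ in $D(\mu)$, i.e., essentially the reproducing kernel of $D(\mu)$ --- the very computation (Costara's method, the roots of the degree-four polynomial, the Cholesky step) that constitutes the bulk of the paper. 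So your plan does not bypass that work; it adds an unbounded search on top of it. To repair the proposal, either exhibit a concrete pair $(n,f)$ with $\sum_{k=0}^{n}(-1)^k\binom{n}{k}\|(M_z')^kf\|^2<0$ together with controlled numerics, or replace the search by a criterion, such as \cite[Corollary 4.3]{cgr2022}, that converts non-subnormality into finitely many verifiable conditions.
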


\noindent The rest of the paper is devoted to the proof of this result. The nature of the proof is computational. Although, the computational details are not included in the proof, the same can be verified by an application of the SageMath code provided in the appendix.
 
\section{Proof of the main result}
As a special case, we choose points $\zeta_1=1$ and $\zeta_2=i$ and begin with the computation of reproducing kernel of the space $D(\mu)$, where $\mu$ is supported at $\{1,i\}$.

\subsection{Reproducing kernel of $D(\mu)$}\label{sec2}
As mentioned earlier, the space $D(\mu)$ is equipped with reproducing kernel. In this section, we take up the task of computing the reproducing kernel for the space $D(\mu)$ where $\mu$ is the regular, positive, Borel measure on the unit circle $\mathbb{T}$ supported on the points $\{1,i\}$. The technique used in obtaining the reproducing kernel heavily relies on the work of Costara \cite{costara2016}.  Thus, for a ready reference, we briefly outline the work carried out in \cite{costara2016} and state the results that are used in the sequel.\\
Given $n\in \mathbb{N}$. Let $c_1,c_2,\dots,c_n$ be strictly positive real numbers and $\zeta_1,\zeta_2,\dots,\zeta_n$ be pairwise distinct points on the unit circle $\mathbb{T}$. Consider the positive Borel measure $\mu=\sum_{j=1}^{n}c_j\delta_{\zeta_j}$ on $\mathbb{T}$, where $\delta_{\zeta_j}$ denotes the Dirac delta measure supported at $\zeta_j$.\\
As an application of Riesz-Fejer theorem, it has been observed in \cite{costara2016} that there exist $\alpha_1,\alpha_2,\dots,\alpha_n \in \mathbb{C}\setminus\overline{\mathbb{D}}$ and $d>0$ such that the following equation holds:
\begin{equation}\label{eq3}
	\prod_{j=1}^{n}|z-\zeta_j|^2+\sum_{j=1}^{n}c_j\prod_{\substack{i=1\\i\neq j}}^{n}|z-\zeta_i|^2=d\prod_{j=1}^n |z-\alpha_j|^2,~~~z\in \mathbb{T}
\end{equation}
Here we state the expressions for reproducing kernels of two subspaces of $H^2$ obtained in \cite[Theorem 3.1, 4.4]{costara2016}:
\begin{enumerate}
	\item The reproducing kernel for $O_\mu H^2$ is given by 
	\begin{equation}\label{eq4}
		\tilde{K_\mu}(z,\lambda)=\displaystyle \frac{O_\mu(z)}{1-\overline{\lambda}z}\overline{O_\mu(\lambda)}
	\end{equation}
	where $O_\mu(z)=\displaystyle\frac{p(z)}{q(z)}$, with $p(z)=\displaystyle\frac{e^{i\theta}}{\sqrt{d}}\displaystyle\prod_{j=1}^{n}(z-\zeta_j)$ ($\theta \in \mathbb{R}$ is chosen such that $O_\mu(0)>0$) and $q(z)=\displaystyle\prod_{j=1}^{n}(z-\alpha_j)$.
	\item The reproducing kernel for $(O_\mu H^2)^\perp$ is given by
	\begin{equation} \label{eq5}
		\hat{K_\mu}(z,\lambda)=\displaystyle\sum_{j=1}^{n}g_j(\lambda)f_j(z) ~~~~ z,\lambda \in \mathbb{D}
	\end{equation}\label{eq6}
	where \begin{equation}
		f_j(z)=\displaystyle\frac{O_\mu(z)}{O_\mu '(\zeta_j)(z-\zeta_j)}
	\end{equation}
	and 
	\begin{equation}\label{eq7}
		\begin{pmatrix}
			\overline{g_1(\lambda)}\\
			\overline{g_2(\lambda)}\\
			\vdots \\
			\overline{g_n(\lambda)}
		\end{pmatrix}=\left(\left[\langle f_i,f_j\rangle_\mu\right]_{1\leq i,j\leq n}\right)^{-1}
		\begin{pmatrix}
			f_1(\lambda)\\
			f_2(\lambda)\\
			\vdots \\
			f_n(\lambda)
		\end{pmatrix}
	\end{equation}	
	\item The Lemma \cite[Lemma 4.2, 4.3]{costara2016} provides formula for obtaining $\langle f_i,f_j\rangle_\mu$ for any $1\leq i,j\leq n$ as follows :
\begin{equation}\label{eq8}
		\|f_i\|_\mu^2=\langle f_i,f_i\rangle_\mu=c_i\zeta_i f_i'(\zeta_i)
	\end{equation} and for $i\neq j$,
	\begin{equation}\label{eq9}
		 \langle f_i,f_j\rangle_\mu= \displaystyle\frac{1}{O_\mu'(\zeta_i)\overline{O_\mu'(\zeta_j)}(1-\overline{\zeta_i}\zeta_j)}
\end{equation}
\end{enumerate}

\noindent The implementation of the above algorithm to the special case with $\zeta_1=1$, $\zeta_2=i$ yields the following theorem:\\

\begin{theorem}\label{thm1}
	Let $\mu$ be a positive Borel measure on unit circle $\mathbb{T}$ supported on $\{1,i\}$ defined as $\mu=\delta_1+\delta_i$. The reproducing kernel for $D(\mu)$ is given by:
	\begin{align*}
		K(z,\lambda)& =\displaystyle\frac{b}{\overline{q(\lambda)}q(z)}\Bigg[\displaystyle\frac{(\bar \lambda -1)(\bar\lambda +i)(z-1)(z-i)}{1-\bar\lambda z}\\ &+(a-1)\left((\bar\lambda +i)(z-i)+(\bar\lambda -1)(z-1)\right)\\ &+\displaystyle\frac{\bar s(\bar \lambda -1)(z-i)}{-i(w-pi)^2}+\frac{s(\bar \lambda +i)(z-1)}{i(w+pi)^2}\Bigg] ~~~~~~~(\lambda, z\in \mathbb{D})
	\end{align*} 
	where $a,b,p,w,s$ are constants and $q(z)$ is a polynomial of degree $2$ whose roots are in $\mathbb{C}\setminus \overline{\mathbb{D}}$.\\
\end{theorem}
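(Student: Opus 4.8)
The plan is to use the realization, recorded above, of $D(\mu)$ as a de Branges-Rovnyak space $H(B)$ with equality of norms, together with Costara's method from \cite{costara2016}, and then to read the reproducing kernel off the de Branges-Rovnyak formula $K_B(z,\lambda)=\frac{I_{\mathcal V}-B(z)B(\lambda)^*}{1-z\overline\lambda}$. The concrete input is the explicit $D(\mu)$-norm: for $\mu=\delta_1+\delta_i$ one has $\|f\|_{D(\mu)}^2=\|f\|_{H^2}^2+D_1(f)+D_i(f)$, where $D_\zeta$ denotes the local Dirichlet integral at the boundary point $\zeta$. Since $\mu$ has exactly the two atoms $1$ and $i$, the Schur function $B$ is rational with denominator the monic quadratic $q(z)=(z-1)(z-i)$ vanishing on the support of $\mu$; this is the source of the common factor $\frac{1}{\overline{q(\lambda)}q(z)}$ in the claimed formula, and matching the leading term will force exactly this $q$, since $\overline{q(\lambda)}=(\overline\lambda-1)(\overline\lambda+i)$.

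First I would carry out Costara's construction for this particular $\mu$: the data attached to each atom (its mass and location, together with the associated Cauchy datum) determine $B$, whose entries are rational with denominator $q$. Writing $B(z)B(\lambda)^*=\frac{P(z,\lambda)}{q(z)\overline{q(\lambda)}}$ with $P$ of bidegree at most $(2,2)$ and substituting into $K_B$ gives
\[ K(z,\lambda)=\frac{1}{\overline{q(\lambda)}\,q(z)}\cdot\frac{q(z)\overline{q(\lambda)}-P(z,\lambda)}{1-\overline\lambda z}. \]
The leading contribution $\frac{q(z)\overline{q(\lambda)}}{1-\overline\lambda z}$ is, up to the scalar $b$, precisely the first bracketed term of the statement, so the remaining task is to show that the complementary piece collapses to the localized corrections. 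Those corrections are anchored at the two atoms: they are combinations of $(\overline\lambda-1)(z-1)$ and $(\overline\lambda+i)(z-i)=\overline{(\lambda-i)}(z-i)$, weighted by the scalars $a,s,\overline s$ and the quantities $q\pm pi$, and their conjugate-symmetric pairing reflects the Hermitian symmetry $K(z,\lambda)=\overline{K(\lambda,z)}$.

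The technical heart is the regrouping that strips the factor $(1-\overline\lambda z)^{-1}$ from everything except the leading term. Concretely, I would show that the polynomial $(1-b)\,q(z)\overline{q(\lambda)}-P(z,\lambda)$ is divisible by $1-\overline\lambda z$; its quotient, of bidegree $(1,1)$ in $(z,\overline\lambda)$, is exactly $b$ times the sum of the three correction terms (terms two through four of the statement). This divisibility is not automatic: it is enforced by the boundary relation that $B$ must satisfy in order that $H(B)=D(\mu)$ isometrically, which on $\mathbb T$ reads $(1-b)\,|q(\lambda)|^2=P(\lambda,\lambda)$, i.e. the numerator vanishes on $\overline\lambda z=1$. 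Carrying out the division and collecting terms produces the stated four-term expression and simultaneously pins down the constants $a,b,p,q,s$. Equivalently, one may bypass $B$ and fix the constants directly from the reproducing identity $\langle f,K_\lambda\rangle_{D(\mu)}=f(\lambda)$, tested against a spanning family that detects the value and the local-Dirichlet behaviour of $f$ at $1$ and $i$ together with the Szeg\H{o} kernels; this yields a finite linear system whose solution gives the same constants.

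I expect the main obstacle to be the explicit determination of $B$ --- equivalently of $P$ and the scalars $a,b,p,q,s$ --- for two interacting boundary atoms. The local Dirichlet difference-quotient operators at $1$ and $i$ are unbounded on $H^2$, so the passage from the a priori infinite-dimensional inversion $K_\lambda=G^{-1}S_\lambda$ (with $G$ the Gram operator of the $D(\mu)$-norm relative to $H^2$ and $S_\lambda(z)=\frac{1}{1-\overline\lambda z}$ the Szeg\H{o} kernel) to a finite linear problem is precisely what Costara's machinery must supply; and the simultaneous bookkeeping of the two atoms, each contributing a correction while cross-coupling through the shared denominator $q$, is the delicate part. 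Once the constants are in hand, the final regrouping into the four displayed terms is routine algebra.
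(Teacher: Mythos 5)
Your proposal has a genuine gap, and it begins with the identification of $q$. You take $q(z)=(z-1)(z-i)$, so that $\overline{q(\lambda)}\,q(z)$ cancels against the numerator $(\bar\lambda-1)(\bar\lambda+i)(z-1)(z-i)$ and the leading term of the kernel collapses to $b/(1-\bar\lambda z)$. That is not the structure of the stated formula, and it cannot be: in the paper $q$ is the quadratic factor whose roots lie \emph{outside} the closed unit disc ($\alpha_1\approx 2.328+0.208i$, $\alpha_2\approx 0.208+2.328i$), produced by a Fej\'er--Riesz-type factorization on $\mathbb{T}$,
\[
|z-1|^2|z-i|^2+|z-1|^2+|z-i|^2=d\,q(z)\overline{q(z)},
\]
obtained in Lemma~\ref{l1} by numerically factoring the quartic $z^4-(3+3i)z^3+8iz^2+(3-3i)z-1$. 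This choice of $q$ is exactly what makes $O_\mu(z)=\frac{(z-1)(z-i)}{\sqrt d\,q(z)}$ a bounded outer function with $O_\mu H^2$ closed in $D(\mu)$, and what later makes the Schur functions $b_j=p_j/q$ of the $H(B)$ realization bounded on $\mathbb{D}$. With your choice, $O_\mu$ would be the constant $1/\sqrt d$, giving $O_\mu H^2=H^2\not\subseteq D(\mu)$, and $b_j=p_j/q$ would blow up at $z=1,i$, so it could not lie in the Schur class; consequently your divisibility claim for $(1-b)\,q(z)\overline{q(\lambda)}-P(z,\lambda)$ and the ensuing regrouping are aimed at the wrong target. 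The numerator factors $(z-1)(z-i)$ and the denominator $q(z)$ coexist in the formula precisely because they are different polynomials.

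Beyond this, your route through $H(B)$ is circular given the tools you invoke. The only mechanism you cite for producing $B$ (and you rightly flag this as the main obstacle) is the construction in \cite{cgr2022}, but that construction consumes precisely the data that the present theorem is designed to produce: the factorization polynomial $q$, the derivatives $O_\mu'(1)=-p-qi$ and $O_\mu'(i)=q+pi$, and the inverse Gram matrix of the two functions $f_1,f_2$ spanning $(O_\mu H^2)^\perp$ --- i.e., the content of Lemma~\ref{l1} and Proposition~\ref{p1}. The paper's proof runs in the opposite direction and never touches $B$: it decomposes $D(\mu)=O_\mu H^2\oplus(O_\mu H^2)^\perp$, obtains the kernel of $O_\mu H^2$ from Costara's Theorem 3.1 (the first bracketed term), computes the kernel of the two-dimensional complement from $f_1,f_2$ and the inverse of their Gram matrix via Costara's Lemmas 4.3 and 4.4 (the remaining three terms, which is where $a$, $s$ and the scalars $p,q$ enter), and adds the two kernels by Costara's Theorem 5.1. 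The de Branges--Rovnyak realization appears only afterwards, in the proof of the main theorem, and is derived from this kernel computation --- not the other way around.
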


\noindent Before we proceed with the proof of the theorem, we need to derive a special case of Riesz-Fejer theorem by choosing $n=2, \zeta_1=1, \zeta_2=i, c_1=c_2=1$ in (\ref{eq3}). Though, the existence of $\alpha_1, \alpha_2$ has been ensured by Riesz-Fejer theorem, we need the numerical values of $\alpha_1, \alpha_2$. These values turn out to be crucial in the proof of theorem \ref{thm1} and consequently for the main result (theorem \ref{main}) of the paper. 
 We first proceed with the following lemma, which gives us the values of $\alpha_1, \alpha_2$.\\
 
\begin{lemma}\label{l1}
	For $z\in \mathbb{T}\setminus\{1,i\}$, there exists $\alpha_1,\alpha_2\in \mathbb{C}\setminus \overline{\mathbb{D}}$ and hence a polynomial $q(z)$ of degree $2$ and a constant $d>0$ such that \[|z-1|^2|z-i|^2+|z-1|^2+|z-i|^2=d~ q(z)\overline{q(z)}\]
\end{lemma}
\begin{proof}
We have,
	\begin{align*}
		&|z-1|^2|z-i|^2+|z-1|^2+|z-i|^2\\
	&=(z-1)(\bar z-1)(z-i)(\bar z+i)+(z-1)(\bar z-1)+(z-i)(\bar z+i)\\
	&=8+3iz-3i\bar z-3z-3\bar z-iz^2+i\bar z^2\\
	&=8+3iz-\displaystyle\frac{3i}{z}-3z-\displaystyle\frac 3z-iz^2+\displaystyle\frac{i}{z^2}\\
	&=-\displaystyle\frac{i}{z^2}\left[z^4-(3+3i)z^3+8iz^2+(3-3i)z-1\right]
\end{align*}

\noindent Let $f(z)=z^4-(3+3i)z^3+8iz^2+(3-3i)z-1$. As we need numerical values of the roots of $f(z)$, the roots have been computed using SageMath. For computational details, readers may consult appendix (\ref{secA1}.\ref{a2}). Further, as expected, two of these roots lie outside the closed unit disc. We denote these two roots by $\alpha_1, \alpha_2$ and the other two roots by $\beta_1$ and $\beta_2$. Further, $\alpha_1, \alpha_2$ satisfy the relations:
\begin{equation}\label{eq10}
	\alpha_1+\alpha_2=a(1+i)~~~\text{ and }~~~\alpha_1\alpha_2=b~i
\end{equation}
for some $a,b\in \mathbb{R}$.\\
Now, define $q(z)$ as $q(z)=(z-\alpha_1)(z-\alpha_2)$. Then $q(z)$ can be written in the form 
\[q(z)=z^2-a(1+i)z+bi.\]
Further, it is easy to check that the other two roots $\beta_1, \beta_2$ of the polynomial $f(z)$ are associated with the polynomial $q(z)$ in the sense that 
\[|z-1|^2|z-i|^2+|z-1|^2+|z-i|^2=d~q(z)\overline{q(z)}\]
where $\beta_1\beta_2=d\times i$.

\end{proof}

\noindent Now, applying (\ref{eq4}), we define \begin{equation}\label{eq1}
	O_\mu(z)=\displaystyle\frac{(z-1)(z-i)}{\sqrt{d}q(z)}.
\end{equation}
Observe that we take $\theta=\frac \pi 2$ so that $O_\mu(0)>0$.
Thus, we obtain the reproducing kernel for $O_\mu H^2$ as follows: 	
	\begin{equation}\label{eq2}
		\tilde{K_\mu}(z,\lambda)=\displaystyle\frac{(\bar \lambda -1)(\bar \lambda +i)(z-1)(z-i)}{d~(1-\bar \lambda z)q(z)\overline{q(\lambda)}}~~~~~~ (\lambda, z\in \mathbb{D}).
	\end{equation}

\noindent Now, we implement (\ref{eq5}) to obtain the reproducing kernel of $(O_\mu H^2)^\perp$.\\

\begin{proposition}\label{p1}
	The reproducing kernel for $(O_\mu H^2)^\perp$ is given by 	
	\begin{align*}
		\hat{K_\mu}(z,\lambda)=\displaystyle\frac{b}{q(z)\overline{q(\lambda)}}&\Bigg[(a-1)\left((\overline{\lambda}+i)(z-i)+(\overline{\lambda}-1)(z-1)\right)\\ &+\frac{\overline{s}(\overline{\lambda}-1)(z-i)}{-i(w-pi)^2}+\frac{s(\overline{\lambda}+i)(z-1)}{i(w+pi)^2}\Bigg]~~~~(\lambda, z\in \mathbb{D})
	\end{align*}
	for some constants $p,w,s$ and a constant  $a=\displaystyle\frac{\alpha_1+\alpha_2}{1+i}$, as obtained in the equation (\ref{eq10}).
\end{proposition}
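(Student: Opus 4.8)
The plan is to exploit that $(O_\mu H^2)^\perp$ is a \emph{finite-dimensional} subspace of $D(\mu)$ and to recover its reproducing kernel from an explicit basis via the inverse-Gram-matrix formula. Since $\mu = \delta_1 + \delta_i$ is supported at exactly two points, the results of Costara \cite{costara2016} (equivalently, the fact that $\operatorname{rank}(M_z^*M_z - I) = 2$) give $\dim (O_\mu H^2)^\perp = 2$. I would first pin down a concrete basis: reading off the $z$-dependence of the claimed kernel $\hat K_\mu$, the only functions of $z$ appearing are $(z-1)/q(z)$ and $(z-i)/q(z)$, so the natural candidates are
\[
v_1(z) = \frac{z-1}{q(z)}, \qquad v_2(z) = \frac{z-i}{q(z)},
\]
that is, $v_j(z) = (z-\zeta_j)/q(z)$ for $\zeta_1 = 1$, $\zeta_2 = i$. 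The first task is to verify that $v_1, v_2 \in D(\mu)$, that they are linearly independent, and that each is orthogonal to $O_\mu z^n$ for every $n \ge 0$ (hence to all of $O_\mu H^2$). Because the roots $\alpha_1,\alpha_2$ of $q$ lie outside $\overline{\mathbb D}$, the $v_j$ are bounded holomorphic functions with bounded derivative, so membership in $D(\mu)$ is immediate; orthogonality to $O_\mu H^2$ is the substantive point, and it is here that the factorization $1 + V(z) = |\sqrt d\, q(z)|^2/(|z-1|^2|z-i|^2)$ from Lemma \ref{l1} enters, since it converts the Poisson-weighted Dirichlet integral defining the $D(\mu)$ inner product into a boundary integral whose denominator is governed by $q\overline q$.

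Having fixed the basis, the second step is to compute the $2\times 2$ Gram matrix $G = \big[\langle v_j, v_i\rangle_{D(\mu)}\big]$, where the $D(\mu)$ inner product is the sum of the $H^2$ inner product and the Dirichlet integral against $P_\mu = P_{\delta_1} + P_{\delta_i}$. The $90^\circ$ symmetry between $1$ and $i$ forces the two diagonal entries to coincide (this is the origin of the common coefficient $a-1$ multiplying both $(\overline\lambda + i)(z-i)$ and $(\overline\lambda - 1)(z-1)$ in the statement), while the two off-diagonal entries are complex conjugates of one another (the origin of the paired terms with $(q\mp pi)^2$, the constant $s$, and its conjugate $\overline s$). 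The constants $a,b$ are those already produced by writing $q(z) = z^2 - a(1+i)z + bi$ in Lemma \ref{l1}, and $p,q,s$ will emerge as the values of the relevant boundary integrals, conveniently packaged through $q(1)$ and $q(i)$; note that $q(i) = -\overline{q(1)}$, which is what makes $(q+pi)^2$ and $(q-pi)^2$ appear in conjugate-symmetric positions.

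The final step is routine linear algebra: invert the Hermitian matrix $G$ and assemble
\[
\hat K_\mu(z,\lambda) = \sum_{i,j} (G^{-1})_{ij}\, v_i(z)\,\overline{v_j(\lambda)},
\]
then regroup the four resulting terms and factor out $b/(q(z)\overline{q(\lambda)})$ to match the displayed expression. I expect the main obstacle to be the explicit evaluation of the off-diagonal Gram entry $\langle v_1, v_2\rangle_{D(\mu)}$: unlike the antipodal case treated in \cite{cgr2022}, the points $1$ and $i$ are not symmetric about the origin, so this integral does not collapse by symmetry, and together with the roots of the degree-four polynomial from Lemma \ref{l1} it produces the unwieldy constants that force the recourse to SageMath. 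A secondary point demanding care is confirming that $\{v_1, v_2\}$ spans the \emph{entire} complement rather than a proper subspace, so that the inverse-Gram formula genuinely returns the reproducing kernel of $(O_\mu H^2)^\perp$.
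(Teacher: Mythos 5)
Your proposal is correct and takes essentially the same route as the paper: the paper's proof also realizes $(O_\mu H^2)^\perp$ as the two-dimensional span of (up to normalizing constants) your $v_1,v_2$ --- namely Costara's functions $f_1(z)=O_\mu(z)/\bigl(O_\mu'(1)(z-1)\bigr)$ and $f_2(z)=O_\mu(z)/\bigl(O_\mu'(i)(z-i)\bigr)$ --- forms their $2\times 2$ Gram matrix, inverts it, and assembles $\hat K_\mu(z,\lambda)$ by exactly your inverse-Gram formula. The only practical difference is that the two points you flag as obstacles (showing the pair spans the whole complement, and evaluating the off-diagonal Gram entry) are not handled by direct boundary integration in the paper but are quoted from Costara: his framework certifies the basis, his norm formulas give $\|f_1\|_\mu^2=f_1'(1)$ and $\|f_2\|_\mu^2=i\,f_2'(i)$, his Lemma 4.3 gives $\langle f_1,f_2\rangle_\mu = 1/\bigl(O_\mu'(1)\,\overline{O_\mu'(i)}\,(1+i)\bigr)$ in closed form, and his Lemma 4.4 is the assembly step, after which only SageMath numerics remain.
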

\begin{proof}
	We use equation (\ref{eq6}) to obtain the following expressions for $f_1(z)$ and $f_2(z)$ (using Sagemath) :\\ $f_1(z)=\displaystyle\frac{O_\mu(z)}{O_\mu'(1)(z-1)}=\displaystyle\frac{z-i}{\sqrt{d}~q(z)(-p-wi)}$ and \\
	$f_2(z)=\displaystyle\frac{O_\mu(z)}{O_\mu'(i)(z-i)}=\displaystyle\frac{z-1}{\sqrt{d}~q(z)(w+pi)}$ \\
	Where, $p,w$ are positive real constants (Refer Appendix \ref{secA1}.\ref{a2}).\\
	\noindent Now, equations (\ref{eq8}) and (\ref{eq9}) are used, to obtain the matrix: 
	 $B=\begin{bmatrix}
		\langle f_1,f_1\rangle & \langle f_1,f_2\rangle\\
		\langle f_2,f_1\rangle & \langle f_2,f_2\rangle
	\end{bmatrix}^{-1} $
	\\
	We use SageMath to compute the entries and observe that $B$ is of the form,\\
	 \[B=\begin{bmatrix}
		a-1 & s\\
		\bar s & a-1
	\end{bmatrix},\] where $s\in \mathbb{C}$ is a constant (See Appendix \ref{secA1}.\ref{a4}) and $`a$' as obtained in the proof of Lemma \ref{l1}.\\
\noindent Now we consider equation (\ref{eq7}) for the present context as follows : for any $\lambda \in \mathbb{D}$, \\
$\begin{bmatrix}
	g_1(\lambda)\\
	g_2(\lambda)
\end{bmatrix}=\overline{B} \times \begin{bmatrix}
	\overline{f_1(\lambda)}\\
	\overline{f_2(\lambda)}
\end{bmatrix}=\begin{bmatrix}
	a-1 & \bar s\\
	s & a-1
\end{bmatrix}
\begin{bmatrix}
	\displaystyle\frac{\overline{\lambda}+i}{\sqrt{d}~\overline{q(\lambda)}(-p+wi)}\\
	\displaystyle\frac{\overline{\lambda}-1}{\sqrt{d}~\overline{q(\lambda)}(w-pi)}
\end{bmatrix}$ (See Appendix \ref{secA1}.\ref{a4}).
\\This gives,\\ $g_1(\lambda)=\displaystyle\frac{1}{\sqrt{d}~\overline{q(\lambda)}}\left[\displaystyle\frac{(a-1)(\bar \lambda +i)}{-p+wi}+\displaystyle\frac{\bar s (\bar \lambda -1)}{w-pi}\right]$ and \\
$g_2(\lambda)=\displaystyle\frac{1}{\sqrt{d}~\overline{q(\lambda)}}\left[\frac{s(\bar \lambda +i)}{-p+wi}+\frac{(a-1)(\bar \lambda -1)}{w-pi}\right]$\\
Finally using equation (\ref{eq5}), we get the reproducing kernel of $(O_\mu H^2)^\perp$ as follows : 
	\begin{align*}
		\hat{K_\mu}(z,\lambda)
		&=g_1(\lambda)f_1(z)+g_2(\lambda)f_2(z)\\
		&=\displaystyle\frac{b}{q(z)\overline{q(\lambda)}}\Bigg[(a-1)\left((\overline{\lambda}+i)(z-i)+(\overline{\lambda}-1)(z-1)\right)\\ &+\frac{\overline{s}(\overline{\lambda}-1)(z-i)}{-i(w-pi)^2}+\frac{s(\overline{\lambda}+i)(z-1)}{i(w+pi)^2}\Bigg]~~~~(\lambda, z\in \mathbb{D}).
	\end{align*}
	This completes the proof.
\end{proof}
\noindent{\bf Proof of Theorem \ref{thm1} :}
\begin{proof}
	By \cite[Theorem 5.1]{costara2016}, the reproducing kernel of $D(\mu)$ is given as follows :\\

For $\lambda, z \in \mathbb{D}$,  
$K(z,\lambda)=\tilde{K_\mu}(z,\lambda)+\hat{K_\mu}(z,\lambda)$.\\

Thus, by equation (\ref{eq2}) and proposition \ref{p1} we get, 
\begin{align*}
	K(z,\lambda) & =\displaystyle\frac{b}{\overline{q(\lambda)}q(z)}\Bigg[\displaystyle\frac{(\bar \lambda -1)(\bar\lambda +i)(z-1)(z-i)}{1-\bar\lambda z}          \\
	             & +(a-1)\left((\bar\lambda +i)(z-i)+(\bar\lambda -1)(z-1)\right)                                                                                   \\
	             & +\displaystyle\frac{\bar s(\bar \lambda -1)(z-i)}{-i(w-pi)^2}+\frac{s(\bar \lambda +i)(z-1)}{i(w+pi)^2}\Bigg] ~~~~~~~(\lambda, z\in \mathbb{D}).
\end{align*}
\end{proof} 

\noindent The above computations for reproducing kernel of $D(\mu)$ further helps to discuss CDSP for $M_z$ on $D(\mu)$.

\subsection{Cauchy Dual Subnormality Problem}\label{sec3}
 As promised earlier, we now present a counter example to CDSP. We show that the Cauchy dual of $M_z$ on $D(\mu)$ is not subnormal, where $\mu$ is a finite, positive, Borel measure supported at $\{1,i\}$. Here we first record following two main results from \cite{cgr2022}.\\
 \begin{theorem}\cite[Theorem 6.4]{cgr2022}\label{thm2}
 	For positive scalars $c_1,c_2,\dots,c_k$ and distinct points $\zeta_1,\zeta_2,\dots,\zeta_k$ on the unit circle $\mathbb{T}$, consider the positive Borel measure $\mu=\sum_{j=1}^{k}c_j\delta_{\zeta_j}$ on $\mathbb{T}$, where $\delta_{\zeta_j}$ denotes the Dirac delta measure supported at $\zeta_j$. Let $X(z)=(z,z^2,\dots,z^k)^T$ and $\{e_j\}_{j=1}^k$ denote the standard basis of $\mathbb{C}^k$. Then there exist $\alpha_1, \alpha_2,\dots,\alpha_k\in \mathbb{C}\setminus\overline{\mathbb{D}}$ and a $k\times k$ upper triangular matrix $P$ such that the Dirichlet-type space $D(\mu)$ coincides with the de Branges-Rovnyak space $H(B)$ with equality of norms, where $B=\left(\frac{p_1}{q}, \frac{p_2}{q}, \dots, \frac{p_k}{q}\right)$ and 
 	\[p_j(z)=\left\langle PX(z),e_j\right\rangle~,~j=1,\dots,k,~~~q(z)=\prod_{j=1}^k (z-\alpha_j)\]
 	Moreover, $\alpha_1,\dots,\alpha_k$ are governed by
 	\[\prod_{j=1}^{k}|z-\zeta_j|^2+\sum_{j=1}^{k}c_j\prod_{\substack{i=1\\i\neq j}}^{k}|z-\zeta_i|^2=d\prod_{j=1}^k |z-\alpha_j|^2,~~~z\in \mathbb{T}\]
 	for some $d>0$.\\
 \end{theorem}

 \begin{theorem}\cite[Theorem 2.1]{cgr2022}\label{thm3}
 	Let $B=(b_1,\dots,b_k)\in {\cal S}(\mathbb{C}^k,\mathbb{C})$ be such that $B(0)=0$ where
 	\[b_j(z)=\displaystyle\frac{p_j(z)}{\prod_{j=1}^k (z-\alpha_j)}\]
 	for polynomials $p_j$ of degree at most $k$ and distinct numbers $\alpha_1,\dots,\alpha_k \in \mathbb{C}\setminus\overline{\mathbb{D}}$. for $r=1,\dots k$, let $a_r=\prod_{1\leq t\neq r \leq k} (\alpha_r-\alpha_t)$. Assume that the operator $M_z$ of multiplication by $z$ on the de Branges-Rovnyak space $H(B)$ is bounded. Then the Cauchy dual $M_z'$ of $M_z$ is subnormal if and only if the matrix 
 		\[\displaystyle\sum_{r,t=1}^{k}\left( \frac{1}{a_r\overline{a_t}}\sum_{j=1}^{k}p_j(\alpha_r)\overline{p_j(\alpha_t)}\right)\left(1-\frac{1}{\alpha_r \overline{\alpha_t}}\right)^l\left(\left(\frac{1}{\alpha_r^{m+2}\overline{\alpha_t}^{n+2}}\right)\right)_{m,n\geq 0}\] 
 		is formally positive semi-definite for every $l\geq 1$.
 \end{theorem}
 \begin{cor}\cite[Corollary 4.3]{cgr2022}\label{cor1}
 	Assume the hypothesis of Theorem \ref{thm3}. If $\alpha_r\overline{\alpha_t}\notin [1,\infty)$ for every $1\leq r\neq t\leq k$, then $M_z'$ is subnormal if and only if 
 	\[\displaystyle \sum_{j=1}^k p_j(\alpha_r)\overline{p_j(\alpha_t)}=0, ~~~1\leq r\neq t\leq k\]
 \end{cor}

\noindent We first find $B=(b_1,b_2)$ such that $D(\mu)$ coincides with $H(B)$ with equality of norms.\\
\begin{theorem}\label{thm4}
	Let $\mu$ be a positive Borel measure on unit circle $\mathbb{T}$ supported on $\{1,i\}$. Then the Dirichlet type space $D(\mu)$ coinsides with de Branges-Rovnyak space $H(B)$ with $B=(b_1,b_2)$ and $b_j=\displaystyle\frac{p_j}{q}$ where \[p_1(z)=p_{11}z+p_{12}z^2, ~p_2(z)=p_{22}z^2 ~~and ~~q(z)=(z-\alpha_1)(z-\alpha_2)\] with $\alpha_1, \alpha_2$ are as obtained in the Lemma \ref{l1} and $p_{11}, p_{12}$ and $p_{22}$ are constants.
\end{theorem}
\begin{proof}
	Let $\xi_1=1$, $\xi_2=i$, $B=(b_{ij})_{1\leq i,j\leq 2}$ be the matrix obtained in proposition \ref{p1}. Also, let $O_\mu(z)$ as defined by equation (\ref{eq1}).\\ 
	
	Since the multiplication operator $M_z$ on $D(\mu)$ is an analytic norm increasing operator (see \cite[theorem 3.6]{richter1991} and the reproducing kernel for $D(\mu)$ is normalized. By \cite[Lemma 3.4]{cgr2022}, there exists a positive semi-definite kernel $\eta:\mathbb{D}\times \mathbb{D}\to \mathbb{C}$ such that 
	\[\eta(z,0)=0,~~~k(z,w)=\displaystyle\frac{1-\eta(z,w)}{1-z\overline{w}},~~z,w\in \mathbb{D}\]
	Equating this with the reproducing kernel obtained in the equations (\ref{eq4}) and (\ref{eq5}), we get,
	\[q(z)\eta(z,w)\overline{q(w)}=q(z)\overline{q(w)}-p(z)\overline{p(w)}\left(1+(1-z\overline{w})\displaystyle\sum_{i,j=1}^{2}\frac{\overline{b_{ij}}}{O_\mu'(\xi_j)O_\mu'(\xi_i)}\frac{1}{(z-\xi_j)(\overline{w}-\overline{\xi_i})}\right)\]
	
	\noindent Since, the right hand side is a polynomial in $z$ and $\overline{w}$, there exists a matrix $\hat A=(a_{ij})_{0\leq i,j\leq 2}$ such that the above expression is equal to $\displaystyle\sum_{i,j=0}^{2}a_{ij}z^i\overline{w}^j$, $z,w\in \mathbb{D}$.\\
	But, this expression at $w=0$ is $0$ for each $z\in \mathbb{D}$, we obtain a matrix $A$ from $\hat A$ by deleting first row and first column of matrix $\hat A$.\\
	After performing all the computations(See Appendix \ref{secA1}.\ref{a6}), we get, \\
	\[a_{11}=17.13,~~a_{12}=-5.46 - 5.46i\]
	\[a_{21}=-5.46 + 5.46 i,~~a_{22}=4.72\]
\noindent Although, here throughout the discussion, the values are typed up to $2$ decimal places,  the actual work is carried out with at least $10$ digits after the decimal point.

\noindent Matrix $A$ is positive semi-definite and hence by applying Cholesky's decomposition (using Sagemath) we get an upper triangular $2\times 2$ matrix $P$ such that $A=P^*P$, where\\ \[P=\begin{bmatrix}
		4.14 & ~~~-1.32 - 1.32i\\
		0 & 1.11
	\end{bmatrix}.\]
\\Once again using Theorem \ref{thm2}, we get,
\begin{align*}
	p_1(z)=&\langle P (z,z^2)^T,(1,0)\rangle=4.14z+(-1.32 -1.32~i)z^2\\
	p_2(z)=&\langle P (z,z^2)^T,(0,1)\rangle=1.11 z^2
\end{align*}

\noindent Therefore, we conclude that the Dirichlet type space $D(\mu)$ coincides with de Branges-Rovnyak space $H(B)$ with $B=(b_1,b_2)$ and $b_j=\displaystyle\frac{p_j}{q}, j=1,2$. 
\end{proof}
These computations lead to a striking counter example of a finite rank analytic, cyclic $2$-isometry whose Cauchy dual is not subnormal. we consider $M_z$ on the above $D(\mu)$ and claim that its Cauchy dual $M_z'$ is not subnormal.\\ 

\noindent {\bf Proof of Theorem \ref{main}:}
\begin{proof}
	We now apply corollary \ref{cor1} to the polynomials $p_1$ and $p_2$ as obtained in theorem \ref{thm4}. The computations using Sagemath reveal that 
	\begin{enumerate}
		\item For $1\leq r\neq t\leq 2$,  $\displaystyle\sum_{j=1}^{2}p_j(\alpha_r)\overline{p_j(\alpha_t)}=-230.72\neq 0$ (see Appendix \ref{secA1}.\ref{a8})
		\item $\alpha_1\overline{\alpha_2}=0.97 - 5.38i\notin [1, \infty)$ and \\
		$\alpha_2\overline{\alpha_1}=0.97 + 5.38i \notin [1,\infty)$ (See Appendix \ref{secA1}.\ref{a7}).
	\end{enumerate}
	
	\noindent Now, it follows that the Cauchy dual $M_z'$ of the multiplication by $z$ operator $M_z$ on $D(\mu)$ where $\mu$ is a positive, Borel measure on the unit circle $\mathbb{T}$ given by $\mu=\delta_1+\delta_i$ is {\bf not subnormal}.\\
	In view of \cite[Proposition 7.1]{cgr2022}, we  conclude the proof of the theorem.
\end{proof}

\begin{remark}
	This provides a subclass of cyclic, analytic, 2-isometries of finite rank whose Cauchy dual is not subnormal.
\end{remark}


\section{Conclusion}\label{sec13}
The work carried out in this paper asserts that if a finitely supported measure $\mu$ on the unit circle $\mathbb{T}$ is of the form $\mu=\delta_{\zeta_1}+\delta_{\zeta_2}$ where $\zeta_1, \zeta_2\in \mathbb{T}$ are points such that the angle between them is $90^\circ$, then the Cauchy dual of the multiplication operator $M_z$ on the corresponding Dirichlet space $D(\mu)$ is not subnormal. It provides a class of operators which are cyclic, analytic, $2$-isometric of finite rank, such that their Cauchy dual is not subnormal. 
In this context, the following points deserve further exploration:

\begin{enumerate}
	\item If the measure $\mu$ is supported at points $\zeta_1$, $\zeta_2$ on the unit circle $\mathbb{T}$ having some other symmetry condition such as the angle between them is $120^\circ$, then is the Cauchy dual of $M_z$ on $D(\mu)$ subnormal?
	\item In view of a positive result in case of the measure supported at antipodal points and negative result in case of the measure supported at the points perpendicular to each other, it is interesting to find a symmetry condition on the support of the measure so that we have a complete solution to CDSP in this case.
	\item The authors have a feeling that the Cauchy dual is subnormal only in the case when the support of the measure is at antipodal points.
\end{enumerate}


\section*{Declarations}
\begin{itemize}
	\item Funding : Not applicable
	\item Conflict of interest/Competing interests (check journal-specific guidelines for which heading to use) : Not applicable
	\item Ethics approval : Not applicable
	\item Consent to participate : Not applicable
	\item Consent for publication : Yes
	\item Availability of data and materials : Not applicable
	\item Code availability : Yes
	\item Authors' contributions : Equal
\end{itemize}
The present work is carried out at the research center at the Department of Mathematics, S. P. College, Pune, India(autonomous).

\begin{appendices}

\section{Program codes in Sagemath}\label{secA1}
The following codes can be executed using CoCalc.
\begin{enumerate}
\item	\label{a1}
\begin{verbatim}
	z = PolynomialRing(ComplexField(), 'z').gen()
	pol=z^4-(3+3*I)*z^3+8*I*z^2+3*(1-I)*z-1
	pol.roots()
\end{verbatim}
\item \label{a2}
\begin{verbatim}
	a=2.53579711118167;b=5.46269136247034;c=0.464202888818329;d=0.183059948594236
	p=0.954692530486206;q=0.297593972106043
	alpha1=2.32798295504488 + 0.207814156136787*i;
	alpha2=0.207814156136787 + 2.32798295504488*i
\end{verbatim}
\item \label{a3}
\begin{verbatim}
	O(z)=((z-1)*(z-I))/(sqrt(d)*(z^2-a*(1+I)*z+b*I))
	Od(z)=derivative(O(z),z)
	Od(1)
	Od(i)
	f1(z)=(z-i)/(sqrt(d)*(z^2-a*(1+I)*z+b*I)*(-p-q*I))
	f1d(z)=derivative(f1(z),z)
	f2(z)=(z-1)/(sqrt(d)*(z^2-a*(1+I)*z+b*I)*(q+p*I))
	f2d(z)=derivative(f2(z),z)
\end{verbatim}
\item \label{a4} \begin{verbatim}
	1/((i-1)*(-p-q*I)^2)
	w=-0.695548570053500 - 0.127327085478790*I;
	wbar=-0.695548570053500 + 0.127327085478790*I
	A=matrix(2,2,[1.10401859575639,w,wbar,1.10401859575639])
	B=A.inverse()
	print(B)
	s=0.967575626606929 + 0.177124344467703*I
\end{verbatim}
\item \label{a5}
\begin{verbatim}
	qq(z)=(z-alpha1)*(z-alpha2)
	h1(z)=b*(1+I)/(qq(z)*conjugate(qq(1)))*((a-1)*(z-I)+(s*(z-1)/(I*(q+p*I)^2)))
	h1d(z)=derivative(h1(z),z)
	u1(z)=(h1(z)-h1(1))/(z-1)
	u2(z)=(h1(z)-h1(I))/(z-I)
	conjugate(h1d(0))+conjugate(u1(0))+conjugate(u2(0))
\end{verbatim}
\item \label{a6}
\begin{verbatim}
	m=s/((q+p*I)*(-p+q*I))
	n=conjugate(s)/((-p-q*I)*(q-p*I))
	
	a11=2*a^2+(-2-m-n-I*(m-n))/d
	a12=-a*(1+I)+(2+2*I-a*I-m-n*I-a)/d
	a21=-a*(1-I)+(2-2*I+a*I+m*I-n-a)/d
	a22=1+(2*a-3+m+n)/d
	
	D=matrix(2,2,[a11,a12,a21,a22])
	print(D)
	import numpy as np
	from scipy.linalg import cho_factor, cho_solve
	T=np.linalg.cholesky(D)
	
	P=matrix(2,2, [4.13925355+0.j,-1.31972862-1.31972862j,0,1.11084575+0.j])
	Q=matrix(2,2,[4.13925355,0,-1.31972862+1.31972862*I,1.11084575])
	print(P) 
	print(Q*P)
	
	p1(z)=4.13925355*z+(-1.31972862000000 -1.31972862000000*I)*z^2
		p2(z)=1.11084575*z^2
\end{verbatim}
\item \label{a7}
\begin{verbatim}
	a1=alpha1-alpha2;a2=alpha2-alpha1;
	k11=1/(a1*conjugate(a1))*(p1(alpha1)*conjugate(p1(alpha1))+
	p2(alpha1)*conjugate(p2(alpha1)))*(1-1/(alpha1*conjugate(alpha1)))^2
	k12=1/(a1*conjugate(a2))*(p1(alpha1)*conjugate(p1(alpha2))+
	p2(alpha1)*conjugate(p2(alpha2)))*(1-1/(alpha1*conjugate(alpha2)))^2
	k13=1/(a2*conjugate(a1))*(p1(alpha2)*conjugate(p1(alpha1))+
	p2(alpha2)*conjugate(p2(alpha1)))*(1-1/(alpha2*conjugate(alpha1)))^2
	k14=1/(a2*conjugate(a2))*(p1(alpha2)*conjugate(p1(alpha2))+
	p2(alpha2)*conjugate(p2(alpha2)))*(1-1/(alpha2*conjugate(alpha2)))^2
	m = matrix(CC, 1,1, lambda i, j:
	k11*(1/(alpha1^(i+2)*(conjugate(alpha1))^(j+2)))+
	k12*(1/(alpha1^(i+2)*(conjugate(alpha2))^(j+2)))+
	k13*(1/(alpha2^(i+2)*(conjugate(alpha1))^(j+2)))+
	k14*(1/(alpha2^(i+2)*(conjugate(alpha2))^(j+2)))); #print(m);
	det(m)
\end{verbatim}
\item \label{a8}
\begin{verbatim}
	alpha1*conjugate(alpha2)
	alpha2*conjugate(alpha1)
	p1(alpha1)*conjugate(p1(alpha2))+p1(alpha2)*conjugate(p1(alpha1))+
	p2(alpha1)*conjugate(p2(alpha2))+p2(alpha2)*conjugate(p2(alpha1))
\end{verbatim}
\end{enumerate}



\end{appendices}

\bibliographystyle{sn-basic}
\bibliography{ref.bib}

\end{document}